\documentclass[11pt]{article}

\usepackage{amsmath,amssymb,amsthm}
\usepackage{tikz}
\usepackage{float}
\usepackage[margin=1in]{geometry}

\newtheorem{lemma}{Lemma}[section]
\newtheorem{corollary}{Corollary}[section]
\newtheorem{theorem}{Theorem}[section]

\title{Every Minimal Counterexample to the Erd\H{o}s--Gy\'arf\'as Conjecture is Predominantly Cubic}

\author{
Avery Carr\\
Independent Researcher\\
\texttt{avery.carr@ymail.com}
}

\date{Updated: May 13, 2026}

\begin{document}

\maketitle

\begin{abstract}
A minimal counterexample to the Erd\H{o}s--Gy\'arf\'as conjecture is a graph of minimum possible order and size with minimum degree at least \(3\) that contains no cycle whose length is a power of \(2\). Markstr\"om observed that any such graph must contain an independent set of vertices of degree at least \(4\) together with a nonempty set of vertices of degree exactly \(3\). As an immediate consequence, every regular minimal counterexample must be cubic. Building on this structure, two additional consequences are derived. First, every vertex of a minimal counterexample is adjacent to a vertex of degree exactly \(3\). Second, at least \(4/7\) of the vertices of any minimal counterexample must have degree exactly \(3\). 
\end{abstract}

\bigskip

\noindent \textbf{Keywords:} Erd\H{o}s--Gy\'arf\'as conjecture, cycles of power-of-two length, cubic graphs, minimal counterexamples, graph structure, extremal graph theory

\medskip

\noindent \textbf{Mathematics Subject Classification:} 05C38, 05C35, 05C75

\section*{Notation}

All graphs considered in this note are finite, simple, and undirected. Let \(G=(V(G),E(G))\) be a graph with vertex set \(V(G)\) and edge set \(E(G)\). For a vertex \(v\in V(G)\), the neighborhood of \(v\) is
\[
N(v)=\{u\in V(G):uv\in E(G)\},
\]
and the degree of \(v\) is
\[
d(v)=|N(v)|.
\]

The minimum degree of \(G\) is denoted by
\[
\delta(G)=\min\{d(v):v\in V(G)\}.
\]

A graph \(H\) is a proper subgraph of \(G\) if \(H\subsetneq G\). A graph \(G\) is called \(k\)-regular if every vertex of \(G\) has degree \(k\). A cubic graph is a \(3\)-regular graph.

In this note, a minimal counterexample to the Erd\H{o}s--Gy\'arf\'as conjecture means a graph \(G\) with \(\delta(G)\geq 3\) that contains no cycle whose length is a power of \(2\), chosen with minimum possible order and, subject to that, minimum possible size.

\section*{Introduction}

The Erd\H{o}s--Gy\'arf\'as conjecture asks whether every graph \(G\) with minimum degree \(\delta(G)\geq 3\) contains a cycle whose length is a power of two \cite{erdos1997}. Despite its simple formulation, the conjecture remains open in general and has only been verified for restricted graph classes, including planar graphs, cubic claw-free graphs, \(3\)-connected cubic planar graphs, $P_8$-free graphs, and $P_{10}$-free graphs \cite{DanielShauger2001,NowbandeganiEtAl2014,HeckmanKrakovski2013,GaoShan2022, hushen}.

Computational work of Royle and Markstr\"om further suggests that any counterexample must be highly constrained. In particular, their investigations imply that any cubic counterexample must contain at least \(30\) vertices, while extremal constructions show that the smallest power-of-two cycle lengths can first occur at comparatively large values such as \(16\) \cite{Markstrom2004}.

Recent work of the author establishes the conjecture for graphs of diameter \(2\), proving that every graph \(G\) with \(\operatorname{diam}(G)=2\) and \(\delta(G)\geq 3\) contains a cycle of length \(4\) or \(8\) \cite{CarrDiameter2}. Motivated by the increasingly rigid structure expected of minimal counterexamples, this note studies structural restrictions on such graphs.

\section*{Main Result}

\begin{lemma}
Let \(G\) be a minimal counterexample to the Erd\H{o}s--Gy\'arf\'as conjecture. Then \(\delta(H)\leq 2\) for every proper subgraph \(H\subsetneq G\).
\end{lemma}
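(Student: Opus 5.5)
The argument is a direct contradiction with the minimality in the definition of a minimal counterexample. Suppose \(H\subsetneq G\) is a proper subgraph with \(\delta(H)\geq 3\). The plan is to show that \(H\) is itself a graph with minimum degree at least \(3\) and no cycle of power-of-two length, yet strictly smaller than \(G\) in the order/size ordering. This contradicts the choice of \(G\).

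First, every cycle of \(H\) is a cycle of \(G\), because \(V(H)\subseteq V(G)\) and \(E(H)\subseteq E(G)\). Since \(G\) has no cycle whose length is a power of \(2\), neither does \(H\). Together with \(\delta(H)\geq 3\), this makes \(H\) a counterexample to the Erd\H{o}s--Gy\'arf\'as conjecture. We implicitly take \(H\) nonempty, which is forced anyway if \(\delta(H)\geq 3\) is to be meaningful.

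Second, we compare \(H\) with \(G\) in the lexicographic order (order, then size) used to choose \(G\). Since \(H\neq G\), either \(V(H)\subsetneq V(G)\) or \(V(H)=V(G)\) and \(E(H)\subsetneq E(G)\).
\begin{itemize}
\item In the first case \(|V(H)|<|V(G)|\), which contradicts the minimality of the order of \(G\).
\item In the second case \(|V(H)|=|V(G)|\) and \(|E(H)|<|E(G)|\), which contradicts the minimality of the size of \(G\) among counterexamples of minimum order.
\end{itemize}
Either way we reach a contradiction, so \(\delta(H)\leq 2\).

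There is no real obstacle here. The only point needing care is the second case: the definition minimizes size only subject to minimum order. A proper subgraph on the full vertex set has the same (minimum) order, so the secondary minimization applies to it.
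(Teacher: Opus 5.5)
Your proof is correct and follows the same route as the paper: a proper subgraph \(H\) with \(\delta(H)\geq 3\) inherits the absence of power-of-two cycles from \(G\), so it contradicts the minimality of \(G\). The paper runs the argument in the contrapositive order and leaves implicit the order-then-size case split, which you spell out explicitly.
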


\begin{proof}
Let \(G\) be a minimal counterexample with respect to order and size, and suppose \(H\subsetneq G\) is a proper subgraph with \(\delta(H)\geq 3\). By minimality of \(G\), the graph \(H\) cannot be a counterexample. Hence, \(H\) contains a cycle whose length is a power of \(2\). Since \(H\) is a subgraph of \(G\), the same cycle occurs in \(G\), contradicting that \(G\) is a counterexample. Therefore, \(\delta(H)\leq 2\) for every proper subgraph \(H\subsetneq G\).
\end{proof}

The following corollaries and theorem strengthen the structural picture first
described by Markstr\"om~\cite{Markstrom2004}. In particular,
Corollary~0.1(1) shows that the cubic vertices form a dominating set in every
minimal counterexample. Corollary~0.1(2), originally observed by
Markstr\"om~\cite{Markstrom2004}, states that the vertices of degree at least $4$
form an independent set. Theorem~0.1 then uses this structural restriction to
establish an explicit lower bound on the proportion of cubic vertices.

\begin{corollary}
Let \(G\) be a minimal counterexample to the Erd\H{o}s--Gy\'arf\'as conjecture.

\begin{enumerate}
    \item Every vertex of \(G\) is adjacent to a vertex of degree exactly \(3\).

    \item The set of vertices of degree at least \(4\) forms an independent set.
\end{enumerate}
\end{corollary}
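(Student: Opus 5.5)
Both parts follow from Lemma~0.1 by deleting a single edge. If \(uv\in E(G)\), then \(H=G-uv\) is a proper subgraph with the same vertex set, and the only vertices whose degrees drop are \(u\) and \(v\), each by exactly one. Lemma~0.1 forces \(\delta(H)\le 2\). Since \(\delta(G)\ge 3\), every vertex other than \(u,v\) still has degree at least \(3\) in \(H\). Hence at least one of \(u,v\) has degree \(2\) in \(H\), so \(\min\{d(u),d(v)\}=3\) in \(G\). This is the key observation: \emph{every edge of \(G\) has at least one endpoint of degree exactly \(3\).}

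Part 2 is immediate from this. If two vertices of degree at least \(4\) were adjacent, deleting the edge between them would leave a proper subgraph of minimum degree at least \(3\), contradicting Lemma~0.1. So the vertices of degree at least \(4\) form an independent set.

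For part 1, fix a vertex \(v\). Since \(d(v)\ge 3\), \(v\) has some neighbor \(u\). If \(d(v)\ge 4\), the key observation applied to the edge \(uv\) gives \(d(u)=3\), and we are done. If \(d(v)=3\), the same observation applied to \(uv\) gives nothing new, since \(v\) itself is the cubic endpoint. This case is the main obstacle: we must still show that \(v\) has a neighbor of degree \(3\), that is, that a cubic vertex cannot have all its neighbors of degree at least \(4\).

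My first attempt in that case is to delete \(v\) itself instead of an edge. That lowers each of its three neighbors by one, and they would keep degree at least \(3\) if all had degree at least \(4\). So \(G-v\) would be a proper subgraph with \(\delta\ge 3\), contradicting Lemma~0.1. More generally, deleting any vertex all of whose neighbors have degree at least \(4\) contradicts the lemma, which settles both cases of part 1 at once. I would therefore present part 1 via vertex deletion and part 2 via edge deletion, noting that \(G-v\) is a proper subgraph because it has fewer vertices.
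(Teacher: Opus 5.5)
Your proof is correct and essentially the paper's: part 1 deletes the vertex \(v\) and applies Lemma~0.1, and part 2 deletes an edge between two vertices of degree at least \(4\), exactly as the paper does. Your ``every edge has an endpoint of degree exactly \(3\)'' observation is just an equivalent restatement of part 2, so the edge-deletion detour in part 1 can be dropped from the final write-up.
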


\begin{proof}

(1) Let \(v\in V(G)\). Since \(G-v\) is a proper subgraph of \(G\), Lemma \(0.1\) gives \(\delta(G-v)\leq 2\). Since every vertex of \(G\) has degree at least \(3\), the only way a vertex in \(G-v\) can have degree at most \(2\) after deleting \(v\) is if it was adjacent to \(v\) and had degree exactly \(3\) in \(G\). Therefore, \(v\) is adjacent to a vertex of degree exactly \(3\).

\vspace{1em}

(2) Suppose \(u,v\in V(G)\) are adjacent vertices with \(d(u)\geq 4\) and \(d(v)\geq 4\). Deleting the edge \(uv\) decreases both degrees by exactly \(1\), so both vertices retain degree at least \(3\), while all other vertex degrees remain unchanged. Hence \(\delta(G-uv)\geq 3\), contradicting Lemma \(0.1\). Therefore, no two vertices of degree at least \(4\) can be adjacent.

\end{proof}

\begin{figure}[H]
\centering
\begin{tikzpicture}[scale=1.05]

\fill (0,0) circle (2pt);
\fill (2,0) circle (2pt);

\fill (-1,1.1) circle (2pt);
\fill (-0.35,1.3) circle (2pt);
\fill (0.35,1.3) circle (2pt);

\fill (1.65,1.3) circle (2pt);
\fill (2.35,1.3) circle (2pt);
\fill (3,1.1) circle (2pt);

\node at (0,-0.35) {$v$};
\node at (2,-0.35) {$w$};

\node at (-1,1.45) {$v_1$};
\node at (-0.35,1.65) {$v_2$};
\node at (0.35,1.65) {$v_{k-1}$};

\node at (1.65,1.65) {$w_1$};
\node at (2.35,1.65) {$w_2$};
\node at (3,1.45) {$w_{k-1}$};

\node at (0,1.25) {$\cdots$};
\node at (2.7,1.20) {$\cdots$};

\draw (0,0)--(2,0);

\draw (0,0)--(-1,1.1);
\draw (0,0)--(-0.35,1.3);
\draw (0,0)--(0.35,1.3);

\draw (2,0)--(1.65,1.3);
\draw (2,0)--(2.35,1.3);
\draw (2,0)--(3,1.1);

\end{tikzpicture}

\caption{An edge \(vw\) in a \(k\)-regular graph, where \(k\geq 4\). Each endpoint has \(k-1\geq 3\) other incident edges.}
\end{figure}

\begin{figure}[H]
\centering
\begin{tikzpicture}[scale=1.05]

\fill (0,0) circle (2pt);
\fill (2,0) circle (2pt);

\fill (-1,1.1) circle (2pt);
\fill (-0.35,1.3) circle (2pt);
\fill (0.35,1.3) circle (2pt);

\fill (1.65,1.3) circle (2pt);
\fill (2.35,1.3) circle (2pt);
\fill (3,1.1) circle (2pt);

\node at (0,-0.35) {$v$};
\node at (2,-0.35) {$w$};

\node at (-1,1.45) {$v_1$};
\node at (-0.35,1.65) {$v_2$};
\node at (0.35,1.65) {$v_{k-1}$};

\node at (1.65,1.65) {$w_1$};
\node at (2.35,1.65) {$w_2$};
\node at (3,1.45) {$w_{k-1}$};

\node at (0,1.25) {$\cdots$};
\node at (2.7,1.20) {$\cdots$};

\draw[dashed] (0,0)--(2,0);

\draw (0,0)--(-1,1.1);
\draw (0,0)--(-0.35,1.3);
\draw (0,0)--(0.35,1.3);

\draw (2,0)--(1.65,1.3);
\draw (2,0)--(2.35,1.3);
\draw (2,0)--(3,1.1);

\node[draw=none] at (1,-1.05) {$d_{G-vw}(v)=d_{G-vw}(w)=k-1\geq 3$};

\end{tikzpicture}

\caption{After deleting the edge \(vw\), the vertices \(v\) and \(w\) still have degree at least \(3\), while all other vertices retain their original degrees. Hence \(\delta(G-vw)\geq 3\).}
\end{figure}

\begin{corollary}
If \(G\) is a regular minimal counterexample to the Erd\H{o}s--Gy\'arf\'as conjecture, then \(G\) is cubic.
\end{corollary}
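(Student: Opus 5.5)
We argue by contradiction and rely on Corollary~0.1(2), which says the vertices of degree at least \(4\) form an independent set. Suppose \(G\) is a regular minimal counterexample, say \(k\)-regular. Since \(\delta(G)\geq 3\), we have \(k\geq 3\). The task is to rule out \(k\geq 4\).

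Assume \(k\geq 4\). Then every vertex of \(G\) has degree at least \(4\), so by Corollary~0.1(2) the whole vertex set \(V(G)\) is independent, and hence \(E(G)=\emptyset\). But a graph with no edges has every vertex of degree \(0\), contradicting \(k\geq 4\). (\(G\) is nonempty, since a minimal counterexample has at least one vertex.)

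An equivalent direct route is the one shown in the figures. Pick any edge \(vw\), which exists because \(k\geq 4>0\), and delete it. In \(G-vw\) the vertices \(v\) and \(w\) have degree \(k-1\geq 3\), and all other degrees are unchanged. So \(\delta(G-vw)\geq 3\) for the proper subgraph \(G-vw\), contradicting Lemma~0.1. Alternatively, Corollary~0.1(1) gives the same conclusion: it requires a vertex of degree exactly \(3\), which a \(k\)-regular graph with \(k\geq 4\) does not have.

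Therefore \(k=3\) and \(G\) is cubic. The argument has no real obstacle; the only point needing care is that \(G\) has at least one vertex, and hence at least one edge, so that the contradiction is genuine.
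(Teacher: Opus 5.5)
Your proof is correct and takes the paper's route: assume \(k\geq 4\) and contradict Corollary~0.1(2). You also state the step the paper leaves implicit, namely that an independent vertex set forces \(E(G)=\emptyset\), which is impossible for a nonempty \(k\)-regular graph with \(k\geq 4\); your alternatives via deleting an edge (Lemma~0.1, as in the figures) and via Corollary~0.1(1) are also valid.
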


\begin{proof}
Suppose \(G\) is \(k\)-regular. Since \(\delta(G)\geq3\), one has \(k\geq3\). If \(k\geq4\), then every vertex has degree at least \(4\), contradicting Corollary \(0.1(2)\). Therefore \(k=3\), and hence \(G\) is cubic.
\end{proof}
\begin{theorem}
Let \(G\) be a minimal counterexample to the Erd\H{o}s--Gy\'arf\'as conjecture. Then at least \(4/7\) of the vertices of \(G\) have degree exactly \(3\).
\end{theorem}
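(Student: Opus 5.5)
The plan is a double-counting argument on the edges between the cubic vertices and the vertices of degree at least \(4\), using only Corollary 0.1(2). Partition \(V(G)=A\cup B\), where \(A\) is the set of vertices of degree exactly \(3\) and \(B\) is the set of vertices of degree at least \(4\). This is a genuine partition because \(\delta(G)\geq 3\). If \(B=\emptyset\), every vertex is cubic and there is nothing to prove, so assume \(B\neq\emptyset\).

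I will count the number \(e(A,B)\) of edges with one endpoint in \(A\) and the other in \(B\), in two ways.

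From the \(B\) side: by Corollary 0.1(2), \(B\) is an independent set. Hence every edge at a vertex \(b\in B\) goes to \(A\), and
\[
e(A,B)=\sum_{b\in B} d(b)\geq 4|B|.
\]

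From the \(A\) side: each \(a\in A\) has exactly three neighbours, so it sends at most \(3\) edges into \(B\), and
\[
e(A,B)\leq 3|A|.
\]

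Combining the two bounds gives \(3|A|\geq 4|B|\), that is, \(|B|\leq \tfrac34|A|\). Therefore
\[
\frac{|A|}{|V(G)|}=\frac{|A|}{|A|+|B|}\geq \frac{|A|}{|A|+\tfrac34|A|}=\frac{4}{7},
\]
which is the claimed bound. Note that \(|A|>0\) automatically once \(B\neq\emptyset\), since each vertex of \(B\) has neighbours and they all lie in \(A\).

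I do not expect a real obstacle here; the only substantive input is the independence of \(B\) from Corollary 0.1(2). The bound is tight for this argument only when every vertex of \(B\) has degree exactly \(4\) and every vertex of \(A\) has all three neighbours in \(B\). Improving on \(4/7\) would need further structure. For instance, one could try Corollary 0.1(1) or Lemma 0.1 applied to deleting an edge inside \(A\), to force some \(A\)–\(A\) edges. That is not needed for the statement as given.
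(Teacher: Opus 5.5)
Your proof is correct and matches the paper's argument: you partition into cubic vertices and vertices of degree at least $4$, use the independence of the latter (Corollary 0.1(2)), and double count the edges between the two classes to get $4|B|\leq 3|A|$, hence $|A|\geq \tfrac47|V(G)|$. Your separate treatment of the case $B=\emptyset$ is harmless, since the paper's inequality chain already covers it.
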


\begin{proof}
Let
\[
V_3=\{v\in V(G):d(v)=3\}
\]
and
\[
V_{\geq4}=\{v\in V(G):d(v)\geq4\}.
\]

By Corollary \(0.1(2)\), the set \(V_{\geq4}\) is independent. Hence every edge incident to a vertex of \(V_{\geq4}\) joins it to a vertex of \(V_3\).

Let \(e(V_3,V_{\geq4})\) denote the number of edges between the two sets. Since every vertex in \(V_{\geq4}\) has degree at least \(4\), one has
\[
e(V_3,V_{\geq4})\geq4|V_{\geq4}|.
\]
On the other hand, every vertex in \(V_3\) has degree exactly \(3\), so
\[
e(V_3,V_{\geq4})\leq3|V_3|.
\]
Therefore,
\[
4|V_{\geq4}|\leq3|V_3|.
\]

Since \(V(G)=V_3\cup V_{\geq4}\), it follows that
\[
|V(G)|
=
|V_3|+|V_{\geq4}|
\leq
|V_3|+\frac34|V_3|
=
\frac74|V_3|.
\]
Hence,
\[
|V_3|\geq\frac47|V(G)|.
\]

Thus at least \(4/7\) of the vertices of \(G\) have degree exactly \(3\).
\end{proof}

\section*{Acknowledgments}

The author would like to thank Klas Markstr\"om for prior structural observations related to minimal counterexamples to the Erd\H{o}s--Gy\'arf\'as conjecture, as well as the editors for their time and consideration.

\end{document}